\newtheorem{remark}{Remark}[section]
\newtheorem{lemma}{Lemma}[section]
\newtheorem{theorem}{Theorem}[section]
\newtheorem{example*}{Example}
\newtheorem{remark*}{Remark}
\newtheorem{lemma*}{Lemma}
\newtheorem{theorem*}{Theorem}
\newtheorem{proposition*}{Proposition}
\newtheorem{corollary*}{Corollary}
\newtheorem{definition*}{Definition}
\title[Minimal surfaces and solitons to the MCF in $\mathbb{H}^3$ as translation surfaces]{Classification of minimal surfaces and solitons to the mean curvature flow in $\mathbb{H}^3$ as translation surfaces}
\author{Ferreira, T.A. and dos Santos, J.P.}
\address{Jo\~ao Paulo dos Santos - Departamento de Matem\'atica, 
Universidade de Bras\'ilia, 70910-900, Bras\'ilia-DF, Brazil}
\email{joaopsantos@unb.br}
\address{Tarcios Andrey Ferreira - Departamento de Matemática, Universidade de Brasília, 70910-900, Brasília-DF, Brazil }
\email{T.A.Ferreira@mat.unb.br}
\date{\today}
\begin{document}

\subjclass[2020]{53C40, 53C42, 53E10}

\keywords{minimal surfaces, space forms, hyperbolic space, translation surface, translator, conformal soliton, mean curvature flow}

\begin{abstract}
    We consider the hyperbolic three-space in the half-space model endowed with a metric Lie group structure. In this setting, translation surfaces are defined as products of two curves $\alpha$ and $\beta$ with respect to the Lie group operation. We investigate minimal surfaces and solitons to the mean curvature flow arising from specific types of products of these curves. In particular, we provide classification results for minimal surfaces, hyperbolic translators, and conformal solitons to the mean curvature flow.
\end{abstract}

\maketitle

\section{Introduction}

The theory of geometric flows on Riemannian manifolds has been widely studied over the last few decades, particularly mean curvature flow in Euclidean spaces. Following \cite{LimaJPSoliton}, an immersion of a smooth manifold $M^n$ into a Riemannian ambient space $\tilde{M}^{n+1}$ evolves under the mean curvature flow (MCF, for short) if it is the initial condition of a smooth one-parameter family of immersions evolving by their mean curvature vector field. More generally, extrinsic geometric flows are evolution equations describing hypersurfaces of a Riemannian manifold evolving in the normal direction with velocity determined by the corresponding extrinsic curvature.

The simplest solutions to the mean curvature flow are minimal surfaces, which are solutions in which the flow is constant or static. Another special class of solutions is that of solitons, also known as self-similar solutions. Self-similar solutions have played an important role in the development of the theory of the MCF, for example, in Euclidean space, where they serve as comparison solutions to investigate the formation of singularities. 

We investigate minimal surfaces and solitons for the MCF, considering a particular class of surfaces called translation surfaces. Following \cite{LopezHasanis}, the origin of such surfaces is in the classical text of Darboux \cite{Darboux}  where they are presented and later known as Darboux surfaces. Such surfaces are defined as the movement of a curve by a one-parameter family of rigid motions of $\mathbb{R}^3$.
More precisely, a surface $S \subset \mathbb{R}^3$ that can be locally written as the sum of two curves is called a translation surface. In a general Lie group $G$ with product ($ * $) one can define a translation surface $S \subset G$ as a surface in $G$  that can be locally written as the product $\Psi(s,t) = \alpha(s) * \beta(t)$ of two curves $\alpha: I \subset \mathbb{R} \to G$ and $\beta: J \subset \mathbb{R} \to G$. These curves $\alpha$ and $\beta$ are called the generating curves of $S$. 

This is inspired by previous work that has appeared on the study of translation surfaces with constant mean curvature (CMC for short) such as \cite{ferreiraSantosS3,LopezHasanis,Munteanu,Lopez,LopezMunteanu,LopezPerdomo,MoruzMunteanu,Yoon}. These prior works primarily focused on 3-dimensional Thurston geometries, which are also Lie groups. A central aim of this work is to specifically investigate translation surfaces in a 3-dimensional non-Euclidean space form, namely the hyperbolic space $\mathbb{H}^3$.

In section \ref{sec_prelim}, we outline the Lie group structure of $\mathbb{H}^3$ that we will use. This structure is defined by considering the upper half-space model and the group of similarities of $\mathbb{R}^2$. Once we have established this framework, we present two types of translation surfaces to be considered. In both cases, the surfaces are parametrized so that one of the generating curves lies within a horosphere and the other in a totally geodesic plane. We first consider the case where the generating curves are local graphs. We then apply this structure to obtain results for arbitrary curves that lie within the specified subsets. It is important to note that our definition of translation surfaces is significantly more general than the one considered in \cite{Lopez}, where translation surfaces in $\mathbb{H}^3$ within the half-space model were given as translation surfaces in $\mathbb{R}^3$, again for particular classes of curves.

Our results are categorized into three classes of surfaces in $\mathbb{H}^3$: minimal surfaces, hyperbolic translators (see \cite{LimaJPSoliton}) and conformal solitons to mean curvature flow (see \cite{MariOliveira}). The results serve as both classification and rigidity results, since they provide conclusive information about the type of surface obtained when the generating curves are as described above, and these conclusions align with known results in the existing literature. Specifically, we prove that minimal surfaces are totally geodesic planes or a class of minimal surfaces in $\mathbb{H}^3$, which were described in \cite{LopezSingular} as singular minimal surfaces in $\mathbb{R}^3$. Such surfaces are weighted minimal surfaces in $\mathbb{R}^3$ and, when viewed as subsets $\mathbb{H}^3$ in the upper half-space model, they are minimal surfaces. For the hyperbolic translators and conformal solitons to the mean curvature flow, we show that when translation surfaces are generated by curves contained in a totally geodesic plane and in a horosphere, the resulting surfaces are either the totally geodesic planes or the so-called grim-reapers cylinders in each corresponding context, i.e., the translators as given in \cite{LimaJPSoliton} and the conformal solitons in \cite{MariOliveira}. 

The main results of this work are summarized as follows: 
\begin{theorem}\label{Theo_general}
    Let $\alpha: I \subset \mathbb{R} \to \mathbb{H}^3$ be a curve contained in a horosphere $\mathcal{H}_a = \{ (x,y,a) \in \mathbb{H}^3 :  a \textnormal{ is constant}  \}$ and $\beta: J \subset \mathbb{R} \to \mathbb{H}^3$ is a plane curve contained in a totally geodesic hyperbolic plane. Then the translation surface $X: I \times J \to \mathbb{H} ^3 $, $X(s,t) = \alpha(s) * \beta(t) $, up to rigid motions,  is
    \begin{enumerate}
        \item minimal if and only if it is contained in a totally geodesic hyperbolic plane or in a minimal translation cylinder.
        \item a \textit{hyperbolic translator} if and only if it is contained in a horosphere, a totally geodesic hyperbolic plane, or in a hyperbolic grim reaper cylinder. 
        \item a \textit{conformal soliton} if and only if it is contained in a totally geodesic hyperbolic plane or in a conformal grim-reaper cylinder.
    \end{enumerate}
\end{theorem}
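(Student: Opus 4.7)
The plan is to reduce Theorem~\ref{Theo_general} to local-graph versions of the classification that should be established as propositions in Section~\ref{sec_prelim}, and then pass from local to global using the real-analyticity of the defining equations.

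First, I would pin down the Lie group multiplication in the half-space model. Writing $p_i = (x_i, y_i, z_i)$ with $z_i > 0$, the operation induced from the group of similarities of $\mathbb{R}^2$ takes the form $p_1 * p_2 = (x_1 + z_1 x_2,\, y_1 + z_1 y_2,\, z_1 z_2)$. Horospheres $\mathcal{H}_a$ are then the level sets $\{z = a\}$, while totally geodesic planes split into vertical half-planes and hemispheres orthogonal to $\{z=0\}$; both families admit convenient parametrizations that make $X(s,t) = \alpha(s)*\beta(t)$ an explicit immersion whose coordinate components can be read off directly from those of $\alpha$ and $\beta$.

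Next, at every point $(s_0, t_0) \in I \times J$, after restricting to smaller subintervals, I would write $\alpha$ and $\beta$ as graphs in suitable coordinates within their respective subsets (picking, in each case, a coordinate direction in which the respective tangent vector has nonzero projection). The resulting local parametrization of $X$ then falls within the scope of the local-graph propositions, which classify such translation surfaces as follows: minimal ones are totally geodesic planes or minimal translation cylinders; translators are horospheres, geodesic planes, or grim reapers; and conformal solitons are geodesic planes or conformal grim-reaper cylinders. The ``if'' direction in each of the three cases is verified by plugging the corresponding model parametrizations into the respective equation.

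The main obstacle is combining the ``only if'' direction with globalization. Because each of the three characterizing equations (minimality, translator, conformal soliton) is a second-order elliptic PDE, solutions are real-analytic, so a local identification of $X$ with one of the model surfaces should propagate across the connected surface. Concretely, one shows that the set of parameters on which $X$ agrees with a particular model is both open and closed, hence fills the entire component. A subtle point is handling curves that fail to be graphs at isolated parameters: at those points one has to change the graph direction for $\alpha$ or $\beta$ and verify that the gluing of the resulting local identifications is consistent with a single global model, which ultimately follows from the uniqueness provided by analyticity.
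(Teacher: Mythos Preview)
Your reduction to local-graph versions is exactly the paper's strategy: Theorem~\ref{Theo_general} is deduced from Theorems~\ref{theominfirst}, \ref{theoSolitonfirst}, and \ref{theconfSolitonfirst}, which treat the two ``kinds'' of parametrizations obtained when $\alpha$ and $\beta$ are written as graphs. Where you depart from the paper is in the passage from local to global. The paper does not invoke elliptic regularity or real-analytic continuation; instead, it observes that the graph-case theorems already determine the \emph{generating curves} themselves: in every nontrivial case $\alpha$ is forced to be affine (hence extends to a line on all of $\mathbb{R}$) and $\beta$ is the solution of an explicit second-order ODE, whose maximal solutions are described in Lemmas~\ref{teoresultgfirstkind}, \ref{lemma_soliton_JP}, \ref{theo_anal_theo_conf_sol_2}. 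Uniqueness for ODEs then makes the local identifications of $\alpha$ and $\beta$ glue across overlaps, and the different horosphere heights $\mathcal{H}_a$ are reduced to $\mathcal{H}_1$ by a hyperbolic isometry. This is more elementary than your analyticity argument and sidesteps the need to match ambient model surfaces; your route is cleaner conceptually but imports heavier machinery than the problem requires.

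One point to watch: you list hemispheres among the totally geodesic planes that $\beta$ may sit in. The paper's reduction (via rotations about the $e_3$-axis and parabolic translations) normalizes $\beta$ to lie in a \emph{vertical} half-plane, and the graph-case theorems are only stated and proved for $\beta(t)=(0,t,g(t))$ or $\beta(t)=(0,b,t)$. So either the theorem is implicitly restricted to vertical totally geodesic planes, or the hemisphere case would require separate local propositions that the paper does not supply; your proposal would need to address this before the local step can be invoked there.
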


\section{Preliminaries} \label{sec_prelim}

Following \cite{LimaJPSoliton}, let $f : M^n \to \tilde{M}^{n+1}$ be an immersion of a smooth manifold $M^n$ into a Riemannian ambient space $\tilde{M}^{n+1}$. It evolves under the mean curvature flow (MCF for short) if there exists a smooth one-parameter family of immersions $F : M \times I \to \tilde{M}$, with $I = [0, T)$ and $ F_t(\cdot,0) = f( \cdot)$, is an immersion and satisfies the evolution equation
$$         \dfrac{\partial F}{\partial t} (p,t) = \overset{\rightarrow}{H}(p,t), \ \ \ (p,t) \in M \times I. $$
where $\overset{\rightarrow}{H}(p,t)$ is the mean curvature vector field of the immersion $F_t : M \to \tilde{M}$, where $F_t(p) := F(p, t)$ for a fixed $t \in I$. Initially, we approach the static solution to the MCF, namely, minimal surfaces. Further, we consider a special class of solutions to the MCF, called solitons, also known as self-similar solutions.

Throughout this work, consider in $\mathbb{R}^3$ the half-space $\mathbb{R}^3_+ = \{ (x,y,z) \ | \ z >  0 \} $,  where for each $p \in \mathbb{R}^3_+ $, we set the inner product 
$$ \langle \cdot , \cdot \rangle_{H}  = \frac{1}{z^2} \langle \cdot , \cdot\rangle, $$
where $\langle \cdot , \cdot\rangle$ is the  inner product in $\mathbb{R}^3$. The product $ \langle \cdot , \cdot \rangle_{H}  $ gives a Riemannian metric in $\mathbb{R}^3_+$ and the Riemannian manifold $(\mathbb{R}^3_+ , \langle \cdot , \cdot \rangle_{H})$ is called hyperbolic space, denoted by $\mathbb{H}^3$. This model will be called \textit{upper half-space model} for $\mathbb{H}^3$. Whenever we refer to the space $\mathbb{H}^3$, we are using this model.

\subsection{Conformal metrics and the corresponding mean curvatures } 

We say that two Riemannian metrics $g$ and $\tilde{g}$,  defined in a smooth manifold $M$, are locally conformally equivalent if there exists a function $\varphi: \Omega \subset M \to  \mathbb{R}$ such that  $\tilde{g} = e^{2\varphi} g$. In what follows, let $\varphi : \Omega \subset \mathbb{R}^3 \to \mathbb{R}$ be a smooth function, and $\Omega$ an open set of $\mathbb{R}^3$. Thus, let $g = \langle \cdot , \cdot \rangle $ be the usual inner product in $\mathbb{R}^3$, $S \subset \mathbb{R}^3$, $\tilde{S} \subset \mathbb{R}^3_+$, $p=(X_1,X_2,X_3) \in S$. Setting  $\varphi = -\ln z$ such that $e^{2\varphi} = 1/z^2$  gives 
\begin{equation}
    \langle \cdot,\cdot \rangle_H = e^{2\varphi} \langle \cdot , \cdot \rangle = \frac{1}{X_3^2} \langle \cdot , \cdot \rangle . \label{eq:conf-metric}
\end{equation}

Let $N(p)$ be the unit normal vector field to $S$ in $p = (X_1,X_2,X_3) \in S \subset \mathbb{R}^3$. Thus, the unit normal field $\tilde{N}(p)$ to $\tilde{S}$ in $ p \in \tilde{S} \subset \mathbb{H}^3$ will be given by  
\begin{equation}
    \tilde{N}(p) = \frac{1}{e^\varphi} N(p) = X_3 N(p). \label{eq:conf-normal}
\end{equation}
Now, let $\nabla$ and $\tilde{\nabla}$ be the Riemannian connections compatible with the metrics $g$ and  $\tilde{g}$, respectively. For two fields $X,Y$ in $T\mathbb{R}^3$ we have 
\begin{equation}
    \tilde{\nabla}_X Y = \nabla_X Y + \mathcal{S}_{\varphi}(X,Y), \label{eq:conf-connection}
\end{equation}
where $ \mathcal{S}_{\varphi} (X,Y) = d \varphi(X) Y + d \varphi(Y) X - \langle X, Y \rangle \nabla \varphi$, and $\nabla \varphi$ is the gradient of $\varphi$ (see \cite[Chapter II]{book-carmo}).

Such considerations allow us to obtain the principal curvatures and, consequently, the mean curvature for a surface $S$ in $\mathbb{H}^3$ by means of its principal curvatures when seen as a surface of $\mathbb{R}^3.$ Recall that, for a regular surface $S \subset \mathbb{R}^3$, the principal directions in $T_p S$ are given by orthonormal vectors $\{e_1, e_2\}$ such that $d N_p(e_i) = - k_i e_i $. Hence equations \eqref{eq:conf-metric}, \eqref{eq:conf-normal} and \eqref{eq:conf-connection} imply that $ \{ \tilde{ e_1}, \tilde{ e_2}  \} = \{ e^{-\varphi} e_1, e^{-\varphi} e_2  \}$ are principal directions for $\tilde{S} \subset \mathbb{R}_+^3$ and the principal curvatures are given by $ \tilde{k}_i =  e^{- \varphi} \left[ k_i - d \varphi ( N) \right] $. It follows that the mean curvature in $\mathbb{H}^3$ is given by 
$$\tilde{H} = \frac{\tilde{k_1} + \tilde{k_2} }{2}  = e^{-\varphi}(H - d\varphi(N)) . $$
where $d\varphi_p(w) = \langle \nabla \varphi(p), w \rangle $, for all $w \in T_p S$, and $\nabla \varphi (p) = (0,0,- 1/X_3) $. Thus,
\begin{equation}\label{eqcurvaturerelation}
     \tilde{H} = X_3 H + N_3 .
\end{equation}
Hence, a surface in $\mathbb{H}^3$ is minimal if and only if it satisfies 
\begin{equation}\label{eqminimalhyperbolic}
    X_3H + N_3 = 0.
\end{equation}

\subsection{Hyperbolic translators to the MCF in  \texorpdfstring{$\mathbb{H}^3$}{TEXT} } According to \cite{LimaJPSoliton}, these solutions to the MCF are characterized by being generated by the Killing field defined by a one-parameter subgroup of isometries of the ambient manifold. A soliton solution to $MCF$ is given by $F(p, t) = \Gamma_t(f(p))$ where $\Gamma_t$ is a subgroup of one-parameter of the group of isometries of $\tilde{M}$. Denoting with $\xi \in T\tilde{M}$ the corresponding Killing vector field, it is well known (see \cite{Hungerbuhler}) that $f : M^n \to \tilde{M}^{n+1}$ evolves under a soliton solution to MCF corresponding to $\Gamma_t$ if and only if the mean curvature $H$ of $f$ and its unit normal $N$ satisfy, up to tangential diffeomorphisms,
$$    H = \langle N, \xi \rangle. $$
When these isometries are translations along a geodesic, we call the corresponding self-similar solutions translating solitons to the given flow, and the initial hypersurfaces are known as translators. A main
feature of translators in the Euclidean space $\mathbb{R}^n$ is that they appear as type II singularities of certain compact solutions to mean curvature flow (cf. \cite[Theorem~4.1]{HuiskenSinestrari}). Three of the best-known examples of translators in $\mathbb{R}^n$ are the grim reaper cylinder, and the rotational examples known as the translating paraboloid or bowl soliton (see \cite{AltschulerWu}) and the translating catenoids (see \cite{ClutterbuckSchulze}).

Following the exposition in \cite{LimaJPSoliton}, in the context of hyperbolic space $\mathbb{H}^3$, let $\mathcal{G}:=\{ \Gamma_t \ : \  t \in \mathbb{R}\}$  be the one-parameter subgroup of  the isometry group of $ \mathbb{H}^3$, where $\mathcal{G}$ comprises the hyperbolic translations along the $x_3$-axis defined by $ \Gamma_t(p) = e^t p$, $p \in \mathbb{H}^3. $ A solution to the (MCF) given by $F(p,t) = e^t p$ will be called a \textit{translating soliton} with respect to the group $\mathcal{G}$. This name reflects the self-similar evolution under the flow generated by the group action. To describe the associated Killing field, we take a slight abuse of notation by identifying $\mathbb{H}^3 $ with $T_p \mathbb{H}^3$. With this identification, the Killing field associated with the group $\mathcal{G}$ is given by  $\xi(p) = p \in \mathbb{H}^3$. The surface $\tilde{S}$ with unit normal $\tilde{N}$ is the initial condition of a $\mathcal{G}$-soliton generated by $\xi$ if and only if the equality 
\begin{equation}\label{eq_cond_soliton_theory}  
    \tilde{H} = \langle \xi , \tilde{N}  \rangle .
\end{equation}
holds everywhere in $\tilde{S}$ (see \cite{Hungerbuhler}). In this case, $\tilde{S}$ is  called \emph{hyperbolic translator}.
Thus, it follows from equation \eqref{eq_cond_soliton_theory} that a surface $\tilde{S} \subset \mathbb{H}^3$ is a hyperbolic translator if and only
$$   \tilde{H} = \frac{1}{X_3} \langle p,  N (p) \rangle . $$
By equation \eqref{eqcurvaturerelation}, this condition is equivalent to  
\begin{equation}\label{eqconditionsoliton}
    X_3^2 H = X_1 N_1 + X_2 N_2. 
\end{equation}

\subsection{Conformal solitons to the MCF in  \texorpdfstring{$\mathbb{H}^3$}{TEXT} } Another particular class of solutions to the MCF that we will approach here is the \textit{conformal solitons}. In this case, we can proceed as above, but considering $\xi$ as a conformal vector field, instead of a Killing field. Formally, a vector field $\xi$ is conformal if its Lie derivative with respect to the metric $g$, denoted by $\mathcal{L}_\xi g$, satisfies $\mathcal{L}_\xi g = 2 \lambda g$, where $\lambda $ is a smooth function called the potential function. In particular, if $\lambda$ is a constant, the vector field is called homothetic, and if $\lambda$ is identically zero, it is a Killing vector field (see  \cite{AliasLira,ArezzoSun,GiulioColombo,Smoczyk}). 
Conformal solitons in the hyperbolic space $\mathbb{H}^n$ were firstly considered by \cite{MariOliveira}. In this case, if $\xi$ is a conformal vector field in $\mathbb{H}^n$, a conformal soliton satisfies the following equation
\begin{equation}\label{eq_def_solitonconf_general}
    \tilde{H} = \langle  \xi, \tilde{N} \rangle .
\end{equation}
In \cite{MariOliveira}, the authors focus their approach on the conformal field  $\xi(p) = - e_n$, and that is the vector field we will consider here. Hence, for the conformal field  $\xi(p) = - e_3$, for all $p \in \mathbb{H}^3$, a surface $\tilde{S}$ that satisfies $\tilde{H} = - \langle e_3, \tilde{N}(p)  \rangle$. Thus, equation \eqref{eq_def_solitonconf_general} becomes   
$$ \tilde{H} = -\frac{1}{X_3} \langle e_3 , N(p) \rangle = -\frac{N_3}{X_3}. $$
Using equation \eqref{eqcurvaturerelation} we obtain  
\begin{equation}\label{eqconditionconformalsoliton}
    X_3^2 H =  - (X_3 + 1) N_3. 
\end{equation}
 
\subsection{Translation surfaces in  \texorpdfstring{$\mathbb{H}^3$}{TEXT} }

We recall that in the Euclidean space $\mathbb{R}^3$ a surface $S$ is called a translation surface if for two curves $\alpha : I \subset \mathbb{R} \to \mathbb{R}^3 $ and  $\beta : J \subset \mathbb{R} \to \mathbb{R}^3 $, it is given by $ X:  I \times J  \to  \mathbb{R}^3 $, $ (s,t) \mapsto  \alpha(s) + \beta(t) $. It is well known that a minimal surface is a surface where the mean curvature $H$ vanishes everywhere. These minimal translation surfaces in $\mathbb{R}^3$ were classified in \cite{LopezHasanis}. 
In particular, as a motivation for this paper, we refer \cite{LopezHasanis,LopezPerdomo,Lopez,Yoon}

We now also recall that the hyperbolic space $\mathbb{H}^n$ for $n \geq 2 $ is a non-commutative metric Lie group. Following \cite{MeeksPerez}, for the upper half-space model for $\mathbb{H}^n$, it can be seen as the group of similarities of $\mathbb{R}^{n-1}$ by means of the isomorphism 
$$\begin{array}{ccccc}
      \psi_{(x,z)} : & \mathbb{R}^{n-1} & \to & \mathbb{R}^{n-1} \\
        & y & \mapsto &   z y + x 
\end{array}$$
In general, the group operation $*$ for a semidirect product of the form $\mathbb{R}^2 \rtimes_\varphi \mathbb{R}$ is given by 
$$ (x_1,z_1) * (x_2,z_2) = ( x_1 + \varphi_{z_1} (x_2), z_1 + z_2 ), $$
where $\varphi$ is given by the exponential of some matrix $A \in \mathcal{M}_2(\mathbb{R})$, that is, $\varphi_z(x) = e^{z A } x $, and we denote the corresponding group by $\mathbb{R}^2 \rtimes_A \mathbb{R}$. If $A$ is the identity matrix $I_2 \in \mathcal{M}_2(\mathbb{R})$, then $e^{z A } = e^z I_2$ and we recover the group of similarities of $\mathbb{R}^2$. Moreover, the map 
$$ (x,y,z) \in \mathbb{R}^2 \rtimes_{I_2} \mathbb{R} \overset{\Phi}{\mapsto} (x,y,e^z) \in \mathbb{R}^3_+ . $$
gives an isomorphism between $\mathbb{R}^2 \rtimes_{I_2} \mathbb{R}$ and the upper halfspace model $\mathbb{R}^3_+$
for $ \mathbb{H}^3$ with the group structure given previously. Thus, we have 
$$ (x_1,y_1,w_1) * (x_2, y_2 , w_2) \overset{\Phi}{\mapsto}
(x_1,y_1,e^{w_1}) * (x_2, y_2 , e^{w_2}).$$
Hence, for any $(x_1,y_1,z_1), (x_2,y_2,z_2) \in \mathbb{H}^3$ we can define  the Lie product
$$(x_1,y_1,z_1) * (x_2,y_2,z_2) = (z_1 x_2 + x_1  , z_1 y_2 + y_1 , z_1z_2). $$
In this set, considering two regular curves $\alpha : I \subset \mathbb{R} \to \mathbb{H}^3 $ and  $\beta :  J \subset \mathbb{R} \to \mathbb{H}^3$, we call $\alpha * \beta : I \times J \to \mathbb{H}^3$ a translation surface.

Now, let $\alpha : I \subset \mathbb{R} \to \mathbb{H}^3$ and $\beta : J \subset \mathbb{R} \to \mathbb{H}^3$ be two regular curves and suppose that $\alpha$ is contained in the horosphere $\mathcal{H}_h = \{ (x,y,z) \in \mathbb{R}^3_+ : z = h  \}$, more precisely, in the horosphere $\mathcal{H}_1$. Then such a curve can be locally parameterized as   
 $$\alpha(s)  =  (\alpha_1(s),\alpha_2(s),1). $$
Consider also $\beta$ that is contained in the totally geodesic hyperbolic plane that contains the $e_3$ axis, $P_{e_1,e_3} = \{ v \in \mathbb{H}^3 : v = (v_1,0,v_3)  \}$. Thus, such a curve can be parameterized as follows 
$$ \beta(t) =  (\beta_1(t) , 0,  \beta_2(t)). $$
Now, let $A$ be a linear map of the form 
$$ A = \begin{bmatrix}
\cos\theta & - \sin(\theta) & 0 \\
\sin\theta & \cos\theta & 0 \\
0 & 0 & 1 
\end{bmatrix}, \ \ \ \theta \in \mathbb{R}. $$
 This map is a rotation, in the usual sense, around the $e_3$ axis in $\mathbb{R}^3$ and is also well known to be an isometry of $\mathbb{H}^3$. It is easy to see that 
$$ A (\alpha(s) * \beta(t)) =  (A \alpha(s)) * (A \beta(t)) .$$
Since $A(\alpha(s)) = \gamma(s)$ is still a curve in $\mathcal{H}_1$, then the surface $X(s,t) = \alpha(s) * \beta(t)$ is merely a rigid motion of the surface $Y(s,t) = \gamma(s) * (A \beta(t))$. 

Hence, for the purpose of this work, our local analysis of a translation surface can be simplified as follows: let $\alpha : I \subset \mathbb{R} \to \mathbb{H}^3$ and $\beta : J \subset \mathbb{R} \to \mathbb{H}^3$ be two curves in $\mathbb{H}^3$, and suppose that $\alpha$ is contained in the horosphere $\mathcal{H}_1$. Since $\mathcal{H}_1$ is a plane that is orthogonal to the $e_3$ axis, up to parabolic translations, such a curve can be locally parameterized as a graph of the form $\alpha(s)  =  (s,f(s),1)$  or a line of the form $\alpha(s)  =  (a,s,1)$, with $a \in \mathbb{R}$. Consider also a curve $\beta$ contained in some totally geodesic hyperbolic plane. Again, since it is a plane, then, up to parabolic translations, such curves can be parameterized as $ \beta(t) =  ( 0, t, g(t)) $ or a line $ \beta(t) =  ( 0, b , t )$, with $ b \in \mathbb{R}$.

Hence, we can reduce the process to 4 possible cases: \\
If $\alpha(s)  =  (s,f(s),1)$ and  $ \beta(t) =  (0 , t ,g(t)) $, we have 
\begin{equation}\tag{First kind}
    \alpha(s) * \beta(t)  = (s ,  t + f(s), g(t)) 
\end{equation}
If $\alpha(s)  =  (s,f(s),1)$  and $ \beta(t) = ( 0 , b , t ) $,  we have 
\begin{equation}\tag{Second kind}
    \alpha(s) * \beta(t) = (s , f(s) + b , t)    
\end{equation}
If $\alpha(s)  =  (a, s , 1)$  and $ \beta(t) =  (0 , t ,g(t)) $,  we have $\alpha(s) * \beta(t) = (a ,  t + s , g(t)),$ and if  $\alpha(s)  =  (a,s,1)$  and $ \beta(t) = ( 0 , b , t ) $,  we have $ \alpha(s) * \beta(t) = (a ,  s + b , t )$. Both surfaces are clearly contained in totally geodesic hyperbolic planes. Once the local analysis is established, we proceed with the global analysis to obtain Theorem \ref{Theo_general}.

\section{Main results}

Before we proceed, we establish the notation $ f'(s) = \dfrac{d}{ds} f(s), $ that will be used throughout this work. These derivatives will often appear in the simplified form $f'$,  unless otherwise stated. The purpose of this simplification is to make the notation more concise in the computations.

Under these conditions, we will prove the following results in the last section of this work.

\begin{theorem}\label{theominfirst} 
Let $S \subset \mathbb{H}^3$ be a translation surface of the first or second kind. Then $S$ is minimal if and only if it is either contained in a totally geodesic plane in the second kind case, or, for constants \( c, d, m \in \mathbb{R} \),  it can be locally parameterized as 
$$X(s,t) = (s , p(s) + t, q(t)),$$
where $p(s) = c s + d $ and $q(t)$ is the solution of the ordinary differential equation 
\begin{equation}\label{eq_sol_beta_minimal}
     (q')^2(t) = \frac{m}{q^4(t)} - \frac{1}{c^2 + 1 } , \ \ \ m > 0,
\end{equation}
in the first kind case.
\end{theorem}

\begin{remark}
     Following \cite{LopezSingular}, complete solutions of equation \eqref{eq_sol_beta_minimal} yield surfaces referred to as "$\rho$-catenary translation cylinders" in $\mathbb{R}^3$. The case where $\rho = -2$ corresponds to a minimal surface in $\mathbb{H}^3$, when viewed in the upper halfspace model.
\end{remark}

For the hyperbolic translator case, we have the following. 

\begin{theorem}\label{theoSolitonfirst}

Let \( S \subset \mathbb{H}^3 \) be a translation surface of the first or second kind. Then \( S \) is a hyperbolic translator if and only if it is contained in a totally geodesic hyperbolic plane in the second kind case, or, for the first kind case, it is contained either in a horosphere, or, for constants \( c, d \in \mathbb{R} \), it can locally be parametrized as 
$$X(s,t) = (s , p(s) + t, q(t)),$$
where $p(s) = c s + d $ and $q(t)$ is the solution of the ordinary differential equation
\begin{equation}\label{eq_soliton_JP_Main theorem}
    q''(t)=  - 2q'(t) \left[(q')^2(t) + \frac{1}{c^2 + 1} \right] \frac{d + t}{q^2(t)}.
\end{equation}
\end{theorem}

\begin{remark}
Following \cite[Theorems~3.20 and 3.23]{LimaJPSoliton}, the so-called hyperbolic grim reaper surfaces are described as a one-parameter family of non-congruent, complete translators, which are horizontal parabolic cylinders, whose directrix curves are given as solutions of equation \eqref{eq_soliton_JP_Main theorem}.
\end{remark}

For the conformal soliton case, we have the following. 

\begin{theorem}\label{theconfSolitonfirst}
Let \( S \subset \mathbb{H}^3 \) be a translation surface of the first or second kind. Then \( S \) is a conformal soliton if and only if it is contained in a totally geodesic hyperbolic plane in the second kind case, or, for constants \( c, d, m \in \mathbb{R} \), it can be parameterized as
$$ X(s,t) = (s,\ t + p(s),\ q(t)),$$
where $ p(s) = c s + d $, with $ c \ne 0 $, and $ q(t) $ is either a non-negative constant or a solution of the ODE
\begin{equation}\label{eq_ODE_transsoliton_general theroem}
     (q')^2(t) = \frac{m e^{4/q(t)}}{q^4(t)}-  \frac{1}{1 + c^2}, \ \ \ m > 0.
\end{equation}

\end{theorem}

\begin{remark}
The conformal grim reaper cylinders are described in \cite[Theorem~C]{MariOliveira} as conformal solitons with respect to the conformal vector field \( e_n \) in the upper half-space model of \( \mathbb{H}^{n+1} \), are analogous to those described in Theorem~\ref{theconfSolitonfirst}. The solitons presented there are graphs of the form
$$ \Gamma = \{ (x, u(x)) \in \mathbb{R}^{n+1}_+ = \mathbb{R}^n \times (0, +\infty) : x \in \Omega \subset \mathbb{R}^n \},$$
with $ u \in C^{\infty}(\Omega) $.
\end{remark}

\begin{figure}[H]
\centering
\includegraphics[width=0.3\linewidth]{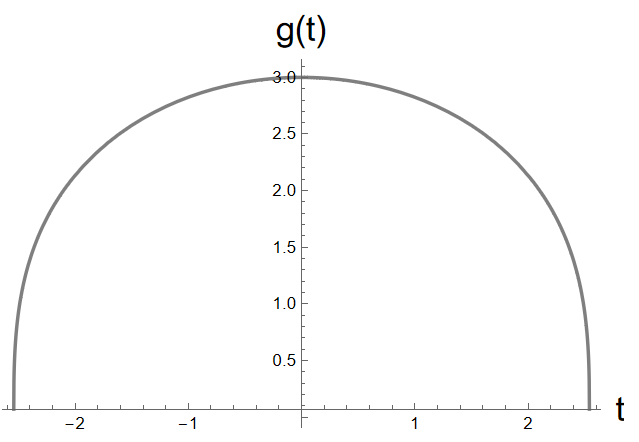}
\includegraphics[width=0.45\linewidth]{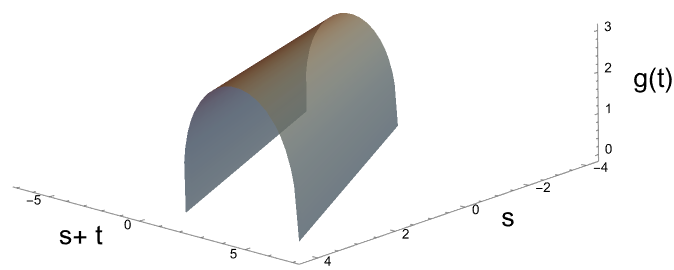}
\caption{\label{fig:Theo_min_1_func} ODE numerical solution and surface of Theorem \ref{theominfirst}, with $y_0 = 3$, $c = 1$ and $d = 0$}
\end{figure}

\begin{figure}[H]
\centering
\includegraphics[width=0.5\linewidth]{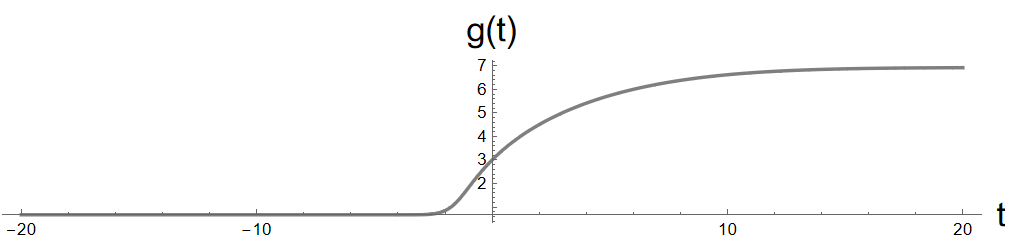}
\includegraphics[width=0.5\linewidth]{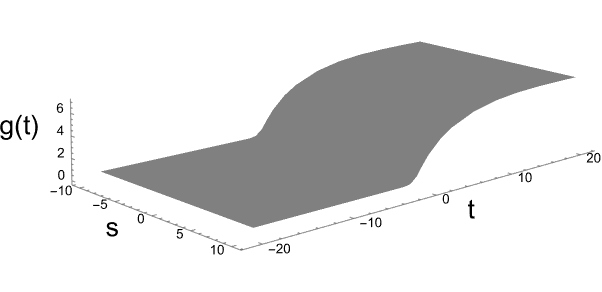}
\caption{\label{fig:Theo_sol_2}  ODE numerical solution and Surface of Theorem \ref{theoSolitonfirst}, with $y(0) = 3$, $y'(0) = 1$, $a = 1$ and $k = 1/2$.}
\end{figure}

\begin{figure}[H]
\centering
\includegraphics[width=0.3\linewidth]{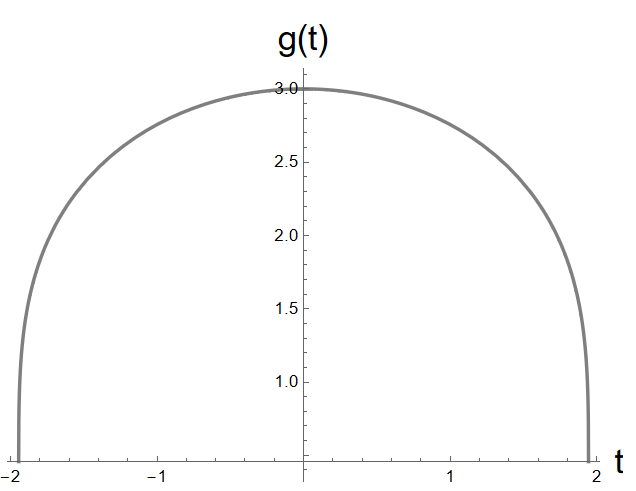}
\includegraphics[width=0.35\linewidth]{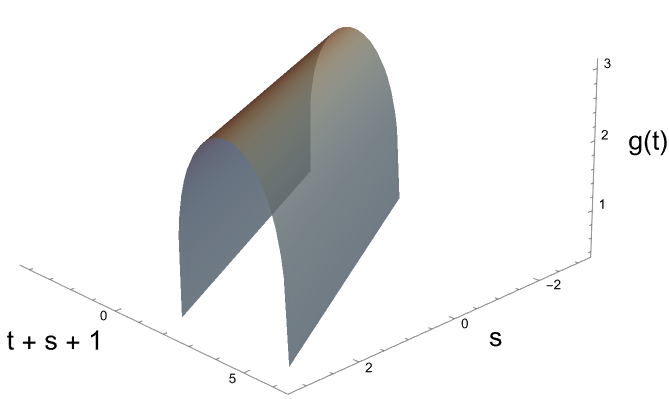}
\caption{\label{fig:Theo_conf_sol_2}  Ode solution and Surface of Theorem \ref{theconfSolitonfirst}, with $y_0 = 3$, $c = 1$ and $d = 0$.}
\end{figure}

\begin{remark}
    At first glance, minimal translation cylinders and conformal grim-reaper cylinders may appear similar; however, this is not the case. In fact, it follows from equations \eqref{eqminimalhyperbolic} and \eqref{eqconditionconformalsoliton} that a surface $S$ is a conformal soliton and a minimal surface if and only if is a totally geodesic plane.
\end{remark}

\section{Geometry of translation surfaces in \texorpdfstring{$\mathbb{H}^3$}{TEXT}} To establish the corresponding equations for each class of surfaces we are considering, we first compute $H$ and $N$ in the Euclidean space and then use equation \eqref{eqconditionsoliton} to obtain $\tilde{H}$. Thus, for a surface of the form 
$$\begin{array}{cccccc}
     X : & I \times J & \to & \mathbb{R}^3 \\ & (s,t) & \mapsto & \alpha(s) * \beta(t).  
\end{array}$$
Let $X_s$ and $X_t$ be the derivatives of $X$ with respect to $s$ and $t$, respectively. We compute the first and second fundamental forms coefficients  in the standard way 
$$ \begin{array}{rclcrclcrcl}
     E & = & \langle X_s , X_s \rangle & , &  
     G & = & \langle X_t , X_t \rangle & , &
     F & = & \langle X_s , X_t \rangle, \\
     l & = & \langle X_{ss} , N \rangle & , &  m & = & \langle X_{tt} , N \rangle, & , & n & = & \langle X_{st} , N \rangle,
\end{array}$$
where $N(s,t)$ is the normal Gauss map over $X(s,t)$. Then, we use the classical definition to obtain the mean curvature, that is 
$$H = \frac{lG - 2nF + Em}{2(EG-F^2)}. $$

Consider now two curves in $\mathbb{H}^3$ locally parameterized as  $ \alpha(s) = (s,f(s),1)$ and $ \beta(t) = (0, t,g(t))$. For such curves, we have
$$ X(s,t) = \alpha(s) * \beta(t) = (s ,  f(s) + t, g(t)).  $$
We compute 
$$\begin{array}{ccccc}
     X_s = (1,f',0), &  X_t = (0,1,g'), & X_{ss} = (0,f'',0),  & X_{st} = (0,0,0) , &  X_{tt} = (0,0,g''). 
\end{array}$$
Hence the unit normal $N$ is given by  
\begin{equation}\label{eq_normal_firstkind}
    N = \frac{X_s \times X_t}{|X_s \times X_t|} = \frac{( f'g', - g' , 1 )}{\sqrt{(g')^2((f')^2 + 1 ) + 1}},
\end{equation}
and the coefficients of the first fundamental form are
$$  E =  1 + (f')^2. \ \ \ G =  1 + (g')^2,  \ \ \ F = f'. $$
The coefficients of the second fundamental form are given by 
$$  l = \dfrac{-f''g'}{\sqrt{(g')^2((f')^2 + 1 ) + 1 }}, \ \ \ m =  \dfrac{g''}{\sqrt{(g')^2((f')^2 + 1 ) + 1}},  \ \ \ n =   0 . $$
Therefore,
\begin{equation}\label{eq_meancurvature_firstcase_simplified}
    H =  \frac{-f''g'(1+(g')^2) + g''(1+(f')^2)}{2[(g')^2((f')^2 + 1 ) + 1]^{3/2}}.
\end{equation}

Now consider the curves $ \alpha(s) = (s,f(s),1)$ and $  \beta(t) = (0,b,t)$, $ b \in \mathbb{R}$. For these curves, we have
$$ X(s,t) = \alpha(s) * \beta(t)  = (s , f(s) + b  , t) . $$
We compute 
$$
\begin{array}{ccccc}
     X_s = (1,f',0), & X_t = (0, 0 ,1), & X_{ss} = (0,f'',0) , & X_{st} = (0,0,0), &  X_{tt} = (0, 0,0) .     
\end{array}
$$
In this case, we have 
\begin{equation}\label{eq_normal_secondkind}
N = \frac{X_s \times X_t}{|X_s \times X_t|} = \frac{( f', - 1 , 0 )}{\sqrt{(f')^2 + 1}}
\end{equation}
The coefficients of the first and second fundamental forms are given respectively by
$$ E = 1 + (f')^2, \ \ \ G =  1 , \ \ \ F =  0, \ \ \textnormal{and} \ \ \ l =  \frac{-f''}{\sqrt{(f')^2 + 1}}, \ \ \ m = 0, \ \ \ n  =  0 .$$
Therefore,
\begin{equation}\label{eq_meancurvature_secondcase_simplified}
    H =  \frac{-f''}{2[(f')^2 + 1 ]^{3/2}}.
\end{equation}
\subsection{Auxiliary results}

We now present the following lemmas, that describe the behavior of each directrix curve of the cylinders given in Theorem \ref{theoSolitonfirst}. The first lemma concerns minimal cylinders:
\begin{lemma}[{\cite[Theorem~3]{LopezSingular}}]\label{teoresultgfirstkind}
Let $g$ be a solution of 
$$    g''g(1+c^2) = - 2 [(g')^2(1 + c^2)  + 1] ,$$
with the initial conditions $ g(0)= y_0 > 0$, $g'(0) = 0$.
Then $g$ is defined in an interval $(-r,r)$, is concave and symmetrical with relation to the $z$ axis, with a point of maximum in $t = 0$. Also, $\lim\limits_{t \to \pm r} g(t) = 0 $ and $\lim\limits_{t \to \pm r} g'(t) = \pm \infty$.
\end{lemma}

For the directrix curve of the hyperbolic grim reaper cylinder, we have the following:
\begin{lemma}[{\cite[Lemma~3.19]{LimaJPSoliton}}]\label{lemma_soliton_JP}
    Given $\lambda \geq 0$ and $k > 0$, the initial value problem
    $$ g'' = - g'(k + (g')^2)\dfrac{2t}{g^2}, $$
    with initial conditions $g(0) = 1$, $g'(0) = \lambda$,
    has a unique smooth solution $g : \mathbb{R} \to (0, + \infty)$ which has the following properties:
    \begin{enumerate}
        \item $g$ is constant if $\lambda = 0$.
        \item $g$ is increasing, convex in $(-\infty, 0)$, and concave in $(0, + \infty )$ if $\lambda > 0$.
        \item $g$ is bounded from above by a positive constant.
        \item $g$ is bounded from below by a positive constant
    \end{enumerate}
\end{lemma}

Consider now the following well-known result 
\begin{theorem}[Theorem 10.12 of \cite{Doering}]\label{teolimitedo}
Let $E$ be an open set of $\mathbb{R}^n$ that contains $x_0$, $f \in C^1(E)$, and $(\alpha,\beta)$ a maximal interval of existence of the following I.V.P.
 $$   \dot{x} = f(x), \ \ \ x(0) = x_0 .$$
If $\beta < \infty$ (similarly $\alpha> -\infty$), then for every compact set $K \subset E$ there exists $t \in (\alpha, \beta)$ such that $x(t) \not\in K$.
\end{theorem}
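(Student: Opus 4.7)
The plan is to argue by contradiction: assume there is a compact $K\subset E$ with $x(t)\in K$ for all $t\in[0,\beta)$, and then use the local existence theorem to extend the solution past $\beta$, contradicting the maximality of $(\alpha,\beta)$. I treat only the case $\beta<\infty$; the case $\alpha>-\infty$ is entirely symmetric.

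First, I would exploit compactness to get a uniform bound on $f$: since $f\in C^1(E)$ is continuous and $K$ is compact, there exists $M>0$ such that $|f(y)|\le M$ for all $y\in K$. Along the trajectory this yields $|\dot x(t)|=|f(x(t))|\le M$ on $[0,\beta)$, hence the Lipschitz estimate
$$|x(t_1)-x(t_2)|\le M\,|t_1-t_2|,\qquad t_1,t_2\in[0,\beta).$$
Because $\beta<\infty$, this provides a Cauchy condition at $\beta$, so the limit $x^{*}:=\lim_{t\to\beta^{-}}x(t)$ exists in $\mathbb{R}^n$; as $K$ is closed and $x(t)\in K$, the limit satisfies $x^{*}\in K\subset E$.

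Next, I would invoke the Picard--Lindel\"of theorem at the point $x^{*}\in E$ (applicable because $f\in C^{1}$ in a neighborhood of $x^{*}$) to obtain $\delta>0$ and a $C^{1}$ local solution $y:(\beta-\delta,\beta+\delta)\to E$ of $\dot y=f(y)$ with $y(\beta)=x^{*}$. Patching $x$ on $(\alpha,\beta)$ with $y$ on $[\beta,\beta+\delta)$ produces a continuous extension $\tilde x:(\alpha,\beta+\delta)\to E$; the two one-sided derivatives at $\beta$ both equal $f(x^{*})$ by the ODE and continuity of $f$, so $\tilde x$ is $C^{1}$ and solves the initial value problem on an interval strictly larger than $(\alpha,\beta)$. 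This contradicts maximality and completes the proof.

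The step I expect to be the main obstacle is verifying that $x^{*}$ actually lies in $E$, rather than merely in the closure $\overline{E}$; only in the former case can Picard--Lindel\"of be applied at $x^{*}$. This is precisely where the hypothesis $K\subset E$ with $K$ compact and $E$ open is used, since such a $K$ is at positive distance from $\partial E$. Without the orbit being trapped in a compact subset of $E$, the limit could either fail to exist or escape through $\partial E$, and the extension argument would collapse.
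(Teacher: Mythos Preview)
The paper does not prove this theorem; it is quoted verbatim as Theorem~10.12 of \cite{Doering} and used as a black box (in the proof of Lemma~\ref{theo_anal_theo_conf_sol_2}). There is therefore no in-paper proof to compare against.

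Your argument is the standard proof of this classical ``escape from compacts'' result and is correct. The contradiction hypothesis you take, namely $x(t)\in K$ for all $t\in[0,\beta)$, is weaker than the full negation (which would require $x(t)\in K$ on all of $(\alpha,\beta)$), but that only makes the contradiction stronger, so the logic is sound. The Cauchy-at-$\beta$ step via the uniform bound $|f|\le M$ on $K$, the identification $x^*\in K\subset E$ using that $K$ is closed and contained in the open set $E$, and the Picard--Lindel\"of extension at $x^*$ are all carried out correctly. Your closing remark pinpoints exactly the place where the hypothesis $K\subset E$ (rather than $K\subset\overline{E}$) is essential.
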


The following lemma addresses an ODE similar to the one presented in the proof \cite[Theorem~C]{MariOliveira}, which describes the directrix curve of the conformal grim reaper cylinder. Since the structure of the proof and the assumptions here are different, we present the lemma with its proof to make the exposition self-contained.
\begin{lemma}\label{theo_anal_theo_conf_sol_2}
Let $g$ be a solution of 
\begin{equation}\label{eq_conf_teo_sol_1_anal_1}
     g''  + 2\frac{(g+1)}{g^2}(g')^2  + 2\frac{(g+1)}{g^2(1+a^2)}  = 0 ,
\end{equation}
with initial conditions 
\begin{equation}\label{eqcondinic_conf_sol}
g(0)= y_0 > 0, \ \ \ \ g'(0) = 0 .
\end{equation}Then $g$ is defined in an interval $(-r,r)$, is concave and symmetric with respect to $z$  axis, with a maximum in $t = 0 $. Also, 
$\lim\limits_{t \to \pm r} g(t) = 0 $ and $\lim\limits_{t \to \pm r} g'(t) = \pm \infty$.
\end{lemma}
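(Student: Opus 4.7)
The plan is to mirror the structure of Lemma~\ref{teoresultgfirstkind}, reducing the second-order ODE to a first integral and then analyzing the resulting quadrature. The right-hand side of \eqref{eq_conf_teo_sol_1_anal_1} is smooth in $(g,g')$ on the open set $\{g>0\}$, so Picard--Lindel\"of gives a unique maximal $C^{2}$ solution on some interval $(-r_-,r_+)$ containing $0$. The equation depends on $g'$ only through $g'^2$ and has no explicit $t$, so if I set $h(t):=g(-t)$ then $h$ satisfies the same initial value problem as $g$; uniqueness forces $h=g$, i.e.\ $g$ is even, and in particular $r_-=r_+=:r$. Concavity is immediate from the ODE: for $g>0$ the two summands $2(g+1)g'^{2}/g^{2}$ and $2(g+1)/(g^{2}(1+a^{2}))$ are both nonnegative and the latter is strictly positive, so $g''<0$ throughout $(-r,r)$. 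Together with $g'(0)=0$, this shows that $t=0$ is the unique strict global maximum and that $g$ strictly decreases on $(0,r)$.

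The core step is the construction of a first integral. I would regard $u=g'^{2}$ as a function of $g$ along the trajectory and use $du/dg=2g''$ to convert \eqref{eq_conf_teo_sol_1_anal_1} into the linear first-order equation
\begin{equation*}
\frac{du}{dg}+\frac{4(g+1)}{g^{2}}\,u=-\frac{4(g+1)}{g^{2}(1+a^{2})}.
\end{equation*}
The integrating factor $g^{4}e^{-4/g}$ makes the left side an exact derivative, and a single integration yields
\begin{equation*}
g'^{2}=F(g):=\frac{k\,e^{4/g}}{g^{4}}-\frac{1}{1+a^{2}},\qquad k=\frac{y_{0}^{4}\,e^{-4/y_{0}}}{1+a^{2}},
\end{equation*}
where the constant $k$ is fixed by the initial data \eqref{eqcondinic_conf_sol}. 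A direct computation gives $F'(g)=-4k(g+1)e^{4/g}/g^{6}<0$, so $F$ is strictly decreasing on $(0,\infty)$; moreover $F(g)\to+\infty$ as $g\to 0^{+}$, $F(g)\to-1/(1+a^{2})$ as $g\to\infty$, and $F(y_{0})=0$. Hence the trajectory must remain in $g\in(0,y_{0}]$, consistent with $t=0$ being the maximum.

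For the boundary behavior, on $(0,r)$ strict concavity gives $g'(t)=-\sqrt{F(g(t))}$, so separating variables produces
\begin{equation*}
r=\int_{0}^{y_{0}}\frac{du}{\sqrt{F(u)}}.
\end{equation*}
Near $u=y_{0}$ one has $F(u)\sim -F'(y_{0})(y_{0}-u)$, an integrable square-root singularity; near $u=0$ the rapid growth of $e^{4/u}$ makes the integrand behave like $u^{2}e^{-2/u}/\sqrt{k}\to 0$. Hence $r<\infty$. To identify the limits, I would apply the escape-from-compact criterion (Theorem~\ref{teolimitedo}) to the planar system $(g,g')$: as $t\to r$ the orbit must leave every compact subset of $\{g>0\}$; since $g\in(0,y_{0}]$ is bounded, this forces $g(t)\to 0^{+}$, and the first integral then gives $|g'(t)|\to\infty$ with sign dictated by concavity. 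Evenness of $g$ transfers the conclusions to $t\to -r$. The main obstacle is the clean derivation of the first integral together with the verification that $\int_{0}^{y_{0}}du/\sqrt{F(u)}$ converges despite the exponential blow-up of $F$ near zero; once that quadrature is in hand, the remaining qualitative claims follow by routine bookkeeping.
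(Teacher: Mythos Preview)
Your proof is correct and follows the same skeleton as the paper's: symmetry via $h(t)=g(-t)$ and uniqueness, concavity from the sign of $g''$, and the escape-from-compacts criterion (Theorem~\ref{teolimitedo}) for the boundary behavior. You go further than the paper by deriving the first integral $g'^2=F(g)$ here (the paper postpones this to the proof of Theorem~\ref{theconfSolitonfirst}) and by explicitly showing $r<\infty$ through the convergent quadrature $\int_0^{y_0}du/\sqrt{F(u)}$, which the paper leaves implicit; this extra work makes your argument more self-contained but is not a different method.
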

\begin{proof}
We rewrite equation \eqref{eq_conf_teo_sol_1_anal_1} as  
$$ g'' (t) = - 2\frac{(g(t) + 1)}{g^2(t)} \left[ (g'(t))^2  + \frac{1}{(1+a^2)}\right] . $$

Now, since $g$ is the solution of \eqref{eq_conf_teo_sol_1_anal_1}, define $h(t) = g(-t)$. Thus, $ h'(t) = -g'(-t)$ and $h''(t) = g''(-t) $. Also, we have 
$$ h''(t) = - 2\frac{h(t) + 1}{h^2(t)} \left(h'^2(t) + \frac{1}{1 + a^2} \right). $$
In addition, $h$ satisfies the initial conditions \eqref{eqcondinic_conf_sol}. Therefore, by the theorem of existence and uniqueness of ODE solutions, we conclude that $ g(t) = g(-t) $, so $ g $ is symmetric with respect to the $ z $-axis. Hence, $ g $ is defined in an interval of the form $ (-r, r)$. Since $ g $ is a positive function and satisfies $ g''(t) < 0 $, it follows that $g$ is concave. By symmetry, $ g $ achieves a unique maximum at $ t = 0 $. Now, since  $g$ is decreasing in $(0,r)$, it follows from Theorem \ref{teolimitedo} that 
    $$\lim_{t \to  r} g(t) = 0  \ \ \textnormal{and} \ \  \lim_{t \to r} g'(t) = - \infty.$$
    Similarly, we have 
    $$\lim_{t \to  -r} g(t) = 0  \ \ \textnormal{and} \ \  \lim_{t \to -r} g'(t) = \infty. $$
\end{proof}

\section{Proof of the main results }

\subsection{Proof of theorem \ref{theominfirst}}
\begin{proof}
We begin by considering parameterizations of the first kind. Using equations \eqref{eqminimalhyperbolic}, \eqref{eq_normal_firstkind} and \eqref{eq_meancurvature_firstcase_simplified}, we have 
\begin{equation}\label{eqorig2}
-f''gg'(1+(g')^2) + g''g(1+(f')^2) = - 2[(g')^2((f')^2 + 1) + 1] .
\end{equation}
 
Suppose initially that  $f = c s + d $, $c, d \in \mathbb{R}$. Thus, equation \eqref{eqorig2} becomes
\begin{equation}\label{eq_part_sol_teo_min2}
    g''g(1+c^2) = - 2 [(g')^2(1 + c^2)  + 1].
\end{equation}
The solution of such an ODE is described by Lemma \ref{teoresultgfirstkind}. Observe that if $g'' \equiv 0$ then $g(t) = m t + n$ and equation \eqref{eq_part_sol_teo_min2} becomes  $ m^2(1 + c^2) + 1 = 0 $, a contradiction. On the other hand, supposing that $g'' \not \equiv 0 $, and since $g > 0$ then, setting $v(g) = (g')^2$ so that $v' = 2g''$, gives
$$ v' g = -4 (v  + m) , \ \ \ m= \frac{1}{c^2 + 1 } . $$
A first integration of the above equation yields
$$    (g')^2(t) = \frac{m} {g^{4}(t)}  - \frac{1}{c^2 + 1 }.$$
Now, suppose $f'' \not\equiv 0$ and $g'' \equiv 0$, then $g(t) = ct + d$ and equation \eqref{eqorig2} becomes
$$ f''g c (1+c^2) = 2[c^2((f')^2 + 1) + 1] . $$
Differentiating with respect to $t$ gives $c^2(1+c^2) \equiv 0$, a contradiction to the above equation.

From now on, suppose that $f'' \not\equiv 0 $ and $g'' \not\equiv 0$ .  Differentiating  equation \eqref{eqorig2} with respect to $t$ gives 
\begin{equation}\label{eq_proof_minimal_1deriv}
     -f''[gg'(1+(g')^2)]' + (1+(f')^2)[(g''g)' + 4g'g''] = 0.
\end{equation}
Since $f'' \not\equiv 0 $, we divide both sides of the equation by $f''$ and differentiate with respect to $s$ to obtain 
$$ [(g''g)' + 4 g'g''] \bigg[\frac{1+(f')^2}{f''}\bigg]' = 0. $$
If $ (g''g)' = - 4g'g'' $ then $ g''g =  - 2 (g')^2 + k_1 $. Set  $v(g) = (g')^2 $, so that $v' = 2g''$ and we have 
$$ v' = \frac{2}{g} (-2v + k_1 ) . $$
A first integration gives 
$$ (g')^2 = \frac{k_2}{g^4} + \frac{k_1}{2}, \ \ \ k_2 \ne 0 .$$
Furthermore, if $ (g''g)' = - 4g'g'' $, it follows from equation \eqref{eq_proof_minimal_1deriv} that $ [gg'(1+(g')^2)]' = 0 $. Thus,  
$$ M_1^2 =  [gg'(1+(g')^2)]^2 = \frac{\left(g^4 k_1 + 2 k_2 \right) \left(g^4 (k_1 + 2) + 2 k_2 \right)^2}{8 g^{10}},  $$
a polynomial equation in $g$ with constant coefficients, which implies that $g$ is constant, a contradiction. Thus, $(g''g)' + 4 g'g'' \neq 0$ and we must have  
$$ 1+(f')^2  = M_2  f'' , \ \ \ M_2 \ne 0 . $$
Hence, equation \eqref{eqorig2} becomes 
$$ f''[ -gg'(1+(g')^2) + g''g M + 2M(g')^2] = - 2.  $$
Since $f'' \not\equiv 0$, we divide both sides by it and differentiate with respect to $s$ to get $ N f'' = - 2$, where $N \ne 0$. Then 
$$ 1 + (f')^2  +  \frac{2M}{N}  = 0, $$ 
which implies that $f'$ is constant, a contradiction. 

Now we consider parameterizations of the second kind. Using equations \eqref{eqminimalhyperbolic}, \eqref{eq_normal_secondkind} and  \eqref{eq_meancurvature_secondcase_simplified}, we have
$$  - t f'' = 0  .$$
Since $t > 0$, we must have $f'' \equiv 0 $. This implies that the surface is contained in a totally geodesic hyperbolic plane.

\end{proof}

\subsection{Proof of  theorem \ref{theoSolitonfirst}}
\begin{proof}
We begin by considering parameterizations of the first kind. Using equations \eqref{eqconditionsoliton}, \eqref{eq_normal_firstkind}  and  \eqref{eq_meancurvature_firstcase_simplified}, we have 
\begin{equation}\label{eqsol1case}
-f''g^2g'(1+(g')^2) + g^2g''(1+(f')^2)  =  2g'[(g')^2((f')^2 + 1 ) + 1] [(sf' - f) - t ]  .
\end{equation}

Suppose initially that $g \equiv c > 0$. Thus, the Gauss map is given by $N \equiv e_3$ and the surface is contained in a horosphere. 

Now, suppose that $sf' - f = -a$, with $a \in \mathbb{R}$. Hence $f(s) = b s + a $, and equation \eqref{eqsol1case} becomes
\begin{equation}\label{eq_theo_sol_1_fconstant}
   g^2g''(b^2+ 1 )  =  - 2g'[(g')^2(b^2 + 1 ) + 1] [a + t ]. 
\end{equation}
Such an equation has at least a constant solution. Also, setting $v = a + t$ and $(b^2 +1)^{-1} = k $, gives $[g(a + t)]' = g'(v)$ and $[g(a+t)]'' = g''(v) $. Thus, equation \eqref{eq_theo_sol_1_fconstant} becomes 
$$  g''  =  - g'[(g')^2 + k] \frac{2v}{g^2}.$$

Suppose from now on that $g' \not\equiv 0 $, $f' \not\equiv 0$ and $(sf' - f)' \not\equiv 0 $. Dividing both sides by $g'$ and differentiating with respect to $t$ gives 
$$ -f''[g^2(1+(g')^2)]' + \bigg[\frac{g^2g''}{g'}\bigg]'(1+(f')^2)  = 2((g')^2)'((f')^2 +1)(sf' - f) - 2[t(g')^2]'((f')^2 +1) - 2 . $$ 
We now divide both sides by $((f')^2 +1)$ and 
differentiate with respect to $s$ and then $t$ to obtain  
\begin{equation}\label{eq_soliton_changeof notation}
    -\bigg[\frac{f''}{(f')^2 +1 }\bigg]' [g^2(1+(g')^2)]'' = 2((g')^2)''(sf' - f)'. 
\end{equation}
We set 
$$ F_1 = -\bigg[\dfrac{f''}{(f')^2 +1 }\bigg]', \ \ F_2 = (sf' - f)', \ \  G_1 = [g^2(1+(g')^2)]'', \ \ G_2 = ((g')^2)''. $$
Thus, equation \eqref{eq_soliton_changeof notation} becomes 
$$ F_1 G_1 = 2G_2F_2 . $$
Suppose initially that $G_2 \ne 0$ and $F_1 \ne 0 $. We have 
$$ \frac{F_2}{F_1}  = \frac{G_1}{2G_2} = - \overline{P} , \ \ \ \overline{P} \in \mathbb{R}.$$
Since each side relies on its own variable, we get 
\begin{equation}\label{eq_banana}
    - \overline{P} \bigg[\frac{f''}{(f')^2 +1 }\bigg]'  = - (sf' - f)'.   
\end{equation}
If $\overline{P} = 0$ we have $F_2 \equiv 0$, a contradiction. Suppose that $\overline{P} \ne 0$ and set $P = 1/\overline{P}$. After a first integration, equation  \eqref{eq_banana} becomes 
\begin{equation}\label{eqgeralsolit}
\begin{array}{rcl}
     f'' =   ((f')^2 +1)[P(sf' - f) + Q]. 
\end{array}
\end{equation}
Dividing both sides of equation \eqref{eqsol1case} by $(1+ (f')^2)$ and differentiating them with respect to $s$ 
and then $t$, yields 
$$ -P(sf' - f)'[g^2(1+(g')^2)]' =  2((g')^2)'(sf' - f)'  - 2\bigg[\frac{1}{(f')^2 + 1}\bigg]'. $$
Since $(sf' -  f )' \not\equiv 0$, we divide both sides of the previous equation by $(sf' -  f)'$ and differentiate with respect to $s$ to obtain 
$$ \bigg[\frac{1}{(f')^2 + 1}\bigg]'  = M (sf - f)' . $$
If $M = 0 $, then  $f'$ is constant, a contradiction since $F_1 \not\equiv 0 $. Thus, $M \ne 0$ and 
$$\frac{1}{P} \bigg[\frac{f''}{(f')^2 +1 }\bigg]'  = (sf' - f)'  = \frac{1}{M} \bigg[\frac{1}{(f')^2 + 1}\bigg]', $$
That is, 
\begin{equation}\label{eq_soliton_solve_error_JP}
    f''  =  \frac{P}{M} + N_1((f')^2 + 1), \ \ \ \textnormal{and} \ \ \  (sf' - f) = \frac{1+ R ((f')^2 +1 )}{M((f')^2 + 1)} . 
\end{equation}
Observe that if $N_1 = 0 $, then  $f''$ is constant, which implies that $f(s) = \frac{P}{2M} s^2 + b s + c $ and $f' = \frac{P}{M} s + b $. But, in view of \eqref{eqgeralsolit} we have
$$ \frac{P^3}{2M} s^4 + D(s) = 0 , $$
where $D(s)$ is a polynomial of degree $3$. We conclude that $P = 0$, a contradiction. Thus, $N_1 \ne 0$. 

Since $f'' \not\equiv 0 $, we differentiate the second equation of \eqref{eq_soliton_solve_error_JP} twice with respect to $s$ to get  
$$ M = - 2 f'' \left[ \frac{1}{(1+ (f')^2)^2} - 4\frac{(f')^2}{(1+ (f')^2)^3} \right] . $$
By the first equation of \eqref{eq_soliton_solve_error_JP}, we have 
$$ -M^2 f'^6 + (6 M N_1 - 3 M^2) f'^4 + (4 M N_1 + 6 P - 3 M^2) (f')^2 - M^2 - 2 M N_1 - 2 P =0 .$$ 
A polynomial equation in $f'$, which leads to $f'$ being constant, a contradiction.

It remains to evaluate the case where $F_1 \equiv 0 $ and/or $G_2 \equiv 0 $. If $F_1 \not\equiv 0 $ and $G_2 \equiv 0$, from equation \eqref{eq_soliton_changeof notation},  we must have $G_1 \equiv 0 $. $G_2 \equiv 0$ implies $(g')^2 = K_2 t + N_2 $, which has a solution given by 
$$ g(t) = \frac{d - 2(K_2 t + N_2)^{3/2} }{3K_2 } . $$
Moreover, $G_1 \equiv 0$ implies $g^2(1 + (g')^2) = K_3 t + N_3 $. Thus, we have 
$$  (d - 2(K_2 t + N_2)^{3/2} )^2 - 9K_2^2K_3 t - 9K_2^2N_3 = 0, $$
which vanishes for every $t$ if $K_2 = 0 $.Therefore, $(g')^2 = N_2 > 0$. Unless $g$ is constant, which is not the case, then $g' = \pm \sqrt{N_2}$ and  $g^2(1 + N_2) = K_3 t + N_3 $. This implies $g^2 = K_4 t + N_4$, that is,
$$ 2gg' = \pm 2g \sqrt{N_2} =  K_4 , $$
which implies that $g$ is constant,  a contradiction. 

If $F_1 \equiv 0 $ we must have $G_2 \equiv 0$ or $F_2  \equiv 0$. Since $F_2 = (sf' - f)' \not\equiv 0$ by hypothesis, we must have  $G_2 \equiv 0 $. We also remember $g' \not\equiv 0$. If $((g')^2)'' = 0 $, then $(g')^2 = K t + L$. If $K = 0$, then $g(t) = \sqrt{L} t + b$. Hence, equation \eqref{eqsol1case} becomes
\begin{equation}\label{suave}
 -f''(\sqrt{L} t + b )^2 \sqrt{L}(1+L) =  2\sqrt{L}[L((f')^2 + 1 ) + 1] [(sf' - f) - t ] .\end{equation}
If $L = 0$, then $g$ is constant, a contradiction. Suppose  $L > 0$. Differentiating equation \eqref{suave} twice with respect to $t$ provides 
$$ -f''2\sqrt{L}L (1+L) = 0 , $$
which implies that $f'' =0 $, that is, $f(s) = c s + d $. 
Returning to equation \eqref{eqsol1case}, and differentiating with respect to $t$ gives
$$ 2\sqrt{L}[L(c^2 + 1 ) + 1] = 0 . $$
Since $L \ne 0 $ we have $ (c^2 + 1 ) = - 1/L $. A contradiction as $L > 0$.

If $K \ne 0$, we remember that as $F_1 \equiv 0$, then $ f''= M (1 + (f')^2)$. In addition, the solution of $(g')^2 = K t + L$, is given by
$$g(t) = \frac{b - 2(Kt + L )^{3/2}}{3K} . $$ 
If $M = 0 $ we have $f'' = 0$, a contradiction since $(sf' -f)' \not\equiv 0$. Thus $M \ne 0$. 
We return to equation \eqref{eqsol1case} with $(g')^2 = K t + L$ to obtain 
$$ -M((f')^2 + 1) g^2g'(1+(g')^2) + g^2 g'' (1+(f')^2)  =  2g'[(g')^2((f')^2 + 1 ) + 1] [(sf' - f) - t ]. $$
Since $g'\not \equiv 0 $, we divide both sides by $((f')^2 + 1 )$ and differentiate with respect to $s$ and then $t$ to obtain 
$$  2g'' (sf' - f)' = 2 \bigg[\frac{1}{1+(f')^2}\bigg]' $$
which implies that $g''$ is constant. Recalling that $g'(t) = \sqrt{Kt + L}$, we have  
$$g''(t) = \frac{1}{2} \frac{K}{\sqrt{Kt + L}}, $$
which implies that $K = 0 $, a contradiction. 
 
We now consider parameterizations of the second kind. Using equations \eqref{eqconditionsoliton}, \eqref{eq_normal_secondkind} and  \eqref{eq_meancurvature_secondcase_simplified},
\begin{equation}\label{eqsol2case}
-f''t^2 =  2[(f')^2  + 1] [s f' - f - b ] .
\end{equation}
Differentiating with respect to $t$ gives 
$$ -2f'' t= 0 . $$
Since $t > 0$, we have $f'' \equiv 0$ then $f(s) = cs + d$ and  \eqref{eqsol2case} becomes $d = - b$. In this case, the surface is contained in a totally geodesic hyperbolic plane. 
    
\end{proof}

\subsection{Proof of theorem \ref{theconfSolitonfirst}}
\begin{proof}
We begin by considering parameterizations of the first kind. Using equations \eqref{eqconditionconformalsoliton}, \eqref{eq_normal_firstkind} and \eqref{eq_meancurvature_firstcase_simplified}, we  have 
\begin{equation}\label{eqsolconf1case}
-f''g^2g'(1+(g')^2) + g^2g''(1+(f')^2)  =  -2(g+1)[(g')^2((f')^2 + 1 ) + 1] . 
\end{equation}
Suppose initially that $f''= 0 $. Then $f(s) = as + b $, and equation \eqref{eqsolconf1case} becomes 
$$ g^2g''(1+a^2)  + 2(g+1)[(g')^2(a^2 + 1 ) + 1] = 0 .$$
Since $g(t) > 0 $ for all $t$, by hypothesis, we have
$$     g''  + 2\frac{(g+1)}{g^2}(g')^2  + 2\frac{(g+1)}{g^2(1+a^2)}  = 0 .$$
The solution of such an ODE is described by Lemma \ref{theo_anal_theo_conf_sol_2}. Set $v(g) = (g')^2$, that is, $v'g' = 2g'g''$ and equation \eqref{eq_conf_teo_sol_1_anal_1} is given now as
$$ v' = - 4\frac{g+1}{g^2}v  - 4\frac{g+1}{g^2(1+a^2)}, $$
a first-order linear equation. 
Hence 
$$ (g')^2(t) = \frac{C e^{4/g(t)}}{g^4(t)}-  \frac{1}{1 + a^2}. $$
 
Suppose from now on that $f'f'' \not\equiv 0 $. We differentiate equation \eqref{eqsolconf1case} with respect to $s$ and divide both sides of the resulting equation by $f'f''$ to obtain 
\begin{equation}\label{eq_confsoliton_Mg}
    -\frac{f'''}{f'f''}g^2g'(1+(g')^2) + 2g^2g''  =  -4(g+1)(g')^2 . 
\end{equation}
Now, differentiating again with respect to $s$ gives 
$$\bigg[\frac{f'''}{f'f''}\bigg]'g^2g'(1+(g')^2) = 0 . $$
If $g^2g'(1+(g')^2) = 0 $, then $g' \equiv 0 $. Since $g (t) > 0 $ for all $t$, then equation \eqref{eqsolconf1case} becomes $ g(t) \equiv -1  $, a contradiction. Thus,  we must have 
\begin{equation}\label{eq_confsol_f'''}
     f''' = 2 M f'f'', \ \ \ M \in \mathbb{R} .
\end{equation}

If $M = 0 $, then $f(s) = as^2 + bs + c$ where $a,b,c \in \mathbb{R}$, with $a \ne 0$ . Thus, equation \eqref{eqsolconf1case} becomes   
$$ -2a g^2g'(1+(g')^2) + g^2g''(1+(2as + b )^2)  =  -2(g+1)[(g')^2((2as + b )^2 + 1 ) + 1]. $$
A differentiation with respect to $s$ and a simplification lead to $ g^2g''  =  -2(g+1)(g')^2 $. Set $v(g) = (g')^2 $, then  $v'g' = 2g'g''$ and as $g(t) > 0$ for all $t$. A first integration gives
$$ g'(t) = \pm \frac{C}{ e^{2g(t)}g^{2}(t)}, \ \ \ C > 0. $$
Returning to \eqref{eqsolconf1case} and since $g^2 g'' = -2 (g+1)(g')^2 $, we have
$$ \pm a C  (e^{4g}g^{4} + C)  = e^{6g}g^{4} (g + 1). $$
This implies that $g$ is constant, a contradiction.  
    
If $M \ne 0 $, then a first integration of \eqref{eq_confsol_f'''} gives $ f'' = M (f')^2 + B $. Thus, equation \eqref{eqsolconf1case} becomes
$$ (f')^2[ - M  g^2g'(1+(g')^2) + g^2g'' + 2(g+1)(g')^2 ] = [B g^2g' - 2(g+1)] (1 + (g')^2) -  g^2g'' . $$
By equation \eqref{eq_confsoliton_Mg} we have 
$$ [- B g^2g' + 2(g+1)](1 + (g')^2 ) = -  g^2g'' .  $$
The substitution in  equation \eqref{eq_confsoliton_Mg} gives 
\begin{equation}\label{eq_conf_sol_proof}
    (B - M) g^2g'(1+(g')^2) = 2(g+1) .
\end{equation}
If $B= M$ we have $g \equiv -1$, a contradiction. Since $g(t) > 0 $ for all $t$, we have  
$$ (B - M) g'(1+(g')^2) = 2(g+1)\frac{1}{g^2} . $$
The substitution in equation \eqref{eq_confsoliton_Mg} provides 
$$  g'' = - \frac{M}{M-B} \frac{g+1}{g^2} - 2\frac{g+1}{g^2} (g')^2 . $$
Set $v(g) = (g')^2$. Then $v'g' = 2g'g''$ and we have
$$  v' = - 4\frac{g+1}{g^2} \left(\frac{M}{2(M-B)}  + v \right). $$
The first integration of such an equation gives 
$$ (g')^2(t) = \frac{C e^{4/g(t)} - M }{2(M-B)g^4(t)} -  , \ \ \ C \ne 0 . $$
Returning to equation \eqref{eq_conf_sol_proof}, we obtain 
$$  (C e^{4/g} - M ) [(M-B)g^4 + C e^{4/g} - M]^2   = 4(g+1)^2(M-B)g^8, $$
which implies that $g $ is constant, a contradiction. 

We now consider parameterizations of the second kind. Using \eqref{eqconditionconformalsoliton} , \eqref{eq_normal_secondkind} and \eqref{eq_meancurvature_secondcase_simplified}, we have $ - t^2 f'' = 0 . $ Since $t > 0$, we have $f'' \equiv 0$, and this equation is contained in a totally geodesic hyperbolic plane.  

\end{proof}

\subsection{Proof of theorem \ref{Theo_general}}

Using Theorems \ref{theominfirst}, \ref{theoSolitonfirst}, and \ref{theconfSolitonfirst}, we can prove Theorem \ref{Theo_general} which summarizes all of them.

\begin{proof} 
Firstly, we recall what we observed at the end of Section 2. If $\alpha(s)=(a,s,1)$, then $\alpha(s) * \beta(t)$ is contained in a totally geodesic hyperbolic plane. Secondly, when $\beta(t)=(0,b,t)$, then we are in the second kind case and Theorems \ref{theominfirst}, \ref{theoSolitonfirst}, and \ref{theconfSolitonfirst}, state that the corresponding surface is contained, again, in a totally geodesic hyperbolic plane. Therefore, we can assume from now that $\alpha$ and $\beta$ are given as local graphs, i.e., our first kind case, and we investigate whether we can extend these graphs globally. It turns out that the only solution in all the three theorems is when $\alpha$ is a straight line segment and $\beta$ is given by the solution of equation \eqref{eq_sol_beta_minimal}, \eqref{eq_ODE_transsoliton_general theroem} or  \eqref{eq_soliton_JP_Main theorem}. Since $\alpha$ can be extended to a straight line defined in all $\mathbb{R}$, is enough to show that $\beta$ is complete in  $\mathbb{H}^3$ too. Since $\beta$ is defined in the context of Theorems \ref{theominfirst}, \ref{theoSolitonfirst} and \ref{theconfSolitonfirst}, we have the following cases
\begin{enumerate}
    \item  For the minimal case, consider $q$ a solution of equation \eqref{eq_sol_beta_minimal}  with initial conditions 
    $$ q(0)= y_0 > 0, \ \ \ m = \frac{y_0^4}{c^2 +1} , $$
    Then, by Lemma \ref{teoresultgfirstkind}, such a solution is defined in an interval $(-r,r)$, is concave and symmetric with respect to the $z$ axis, and attains a maximum at $t = 0 $, Additionally,  
    $$ \lim\limits_{t \to \pm r} q(t) = 0, \ \textnormal{ and } \ \lim\limits_{t \to \pm r} q'(t) = \pm \infty. $$  
    In this case we have the minimal translation cylinders (see Figure \ref{fig:Theo_min_1_func}). 
        
    \item For the hyperbolic translator, set $v = a + t$ and $(b^2 +1)^{-1} = k $ in equation \eqref{eq_soliton_JP_Main theorem}. Thus  
    $$  g''  =  - g'[(g')^2 + k] \frac{2v}{g^2}.$$
    The solution of such an ODE is described by Lemma \ref{lemma_soliton_JP}. In this case, we have hyperbolic grim reaper cylinders (see Figure~\ref{fig:Theo_sol_2}).

    \item For the conformal soliton case, suppose that $q$ is the solution of equation \eqref{eq_ODE_transsoliton_general theroem} with initial conditions
    $$ q_{\pm}(t_0) = y_0, \quad 
    m = \frac{y_0^4}{(1 + c^2)e^{4/y_0}}$$
    Hence, by Lemma \ref{theo_anal_theo_conf_sol_2} the solutions \( g_{\pm}(t) \) are bounded and defined for an interval $(0, t_0)$, \( g_+ \) is convex, \( g_- \) is concave, and for some constant $r < \infty $ the following limits hold:
    $$ \lim_{t \to t_0} g_{\pm}(t) = y_0, \quad \lim_{t \to 0} g_{\pm}(t) = y_0 \mp r, \quad \lim_{t \to 0} g_{\pm}'(t) = 0, \quad \lim_{t \to t_0} g_{\pm}'(t) = \pm \infty. $$
    In this case, we have the conformal grim reaper cylinders (see Figure \ref{fig:Theo_conf_sol_2}).
    \end{enumerate}
\end{proof}

\section{Funding}
T. A. Ferreira was supported by CNPq - Conselho Nacional de Desenvolvimento Cient\'ifico e Tecnol\'ogico grant number 162148/2021-6 and CAPES Print Process N° 88887.890396/2023-00. 

J. P. dos Santos was supported by FAPDF - Fundação de Apoio a Pesquisa do Distrito Federal, grant number 00193-00001678/2024-39, and CNPq, grant number 315614/2021-8.

\bibliographystyle{plain}
\bibliography{references}

\end{document}